\begin{document}

\setlength{\textwidth}{13.5cm}


\newcommand{\DRAFTNUMBER}{2}
\newcommand{\DATE}{\today}
\newcommand{\TITLE}{A Local-Global Criterion for Dynamics on $\PP^1$}
\newcommand{\TITLERUNNING}{A Local-Global Criterion for Dynamics on $\PP^1$}


\hyphenation{ca-non-i-cal archi-me-dean non-archi-me-dean}


\newtheorem{theorem}{Theorem}
\newtheorem{lemma}[theorem]{Lemma}
\newtheorem{conjecture}[theorem]{Conjecture}
\newtheorem{question}[theorem]{Question}
\newtheorem{proposition}[theorem]{Proposition}
\newtheorem{corollary}[theorem]{Corollary}
\newtheorem*{claim}{Claim}

\theoremstyle{definition}
\newtheorem*{definition}{Definition}
\newtheorem*{Notation}{Notation}
\newtheorem{example}[theorem]{Example}

\theoremstyle{remark}
\newtheorem{remark}[theorem]{Remark}
\newtheorem*{acknowledgement}{Acknowledgements}



\newenvironment{notation}[0]{%
  \begin{list}%
    {}%
    {\setlength{\itemindent}{0pt}
     \setlength{\labelwidth}{2\parindent}
     \setlength{\labelsep}{\parindent}
     \setlength{\leftmargin}{3\parindent}
     \setlength{\itemsep}{0pt}
     }%
   }%
  {\end{list}}

\newenvironment{parts}[0]{%
  \begin{list}{}%
    {\setlength{\itemindent}{0pt}
     \setlength{\labelwidth}{1.5\parindent}
     \setlength{\labelsep}{.5\parindent}
     \setlength{\leftmargin}{2\parindent}
     \setlength{\itemsep}{0pt}
     }%
   }%
  {\end{list}}
\newcommand{\Part}[1]{\item[\upshape#1]}

\renewcommand{\a}{\alpha}
\renewcommand{\b}{\beta}
\newcommand{\g}{\gamma}
\renewcommand{\d}{\delta}
\newcommand{\e}{\epsilon}
\newcommand{\f}{\varphi}
\newcommand{\bfphi}{{\boldsymbol{\f}}}
\renewcommand{\l}{\lambda}
\renewcommand{\k}{\kappa}
\newcommand{\lhat}{\hat\lambda}
\newcommand{\m}{\mu}
\newcommand{\bfmu}{{\boldsymbol{\mu}}}
\renewcommand{\o}{\omega}
\renewcommand{\r}{\rho}
\newcommand{\rbar}{{\bar\rho}}
\newcommand{\s}{\sigma}
\newcommand{\sbar}{{\bar\sigma}}
\renewcommand{\t}{\tau}
\newcommand{\z}{\zeta}

\newcommand{\D}{\Delta}
\newcommand{\G}{\Gamma}
\newcommand{\F}{\Phi}

\newcommand{\ga}{{\mathfrak{a}}}
\newcommand{\gA}{{\mathfrak{A}}}
\newcommand{\gb}{{\mathfrak{b}}}
\newcommand{\gB}{{\mathfrak{B}}}
\newcommand{\gC}{{\mathfrak{C}}}
\newcommand{\gD}{{\mathfrak{D}}}
\newcommand{\gE}{{\mathfrak{E}}}
\newcommand{\gm}{{\mathfrak{m}}}
\newcommand{\gn}{{\mathfrak{n}}}
\newcommand{\go}{{\mathfrak{o}}}
\newcommand{\gO}{{\mathfrak{O}}}
\newcommand{\gp}{{\mathfrak{p}}}
\newcommand{\gP}{{\mathfrak{P}}}
\newcommand{\gq}{{\mathfrak{q}}}
\newcommand{\gQ}{{\mathfrak{Q}}}
\newcommand{\gR}{{\mathfrak{R}}}

\newcommand{\Abar}{{\bar A}}
\newcommand{\Ebar}{{\bar E}}
\newcommand{\Kbar}{{\bar K}}
\newcommand{\kbar}{{\bar k}}
\newcommand{\Pbar}{{\bar P}}
\newcommand{\Sbar}{{\bar S}}
\newcommand{\Tbar}{{\bar T}}
\newcommand{\ybar}{{\bar y}}
\newcommand{\phibar}{{\bar\f}}

\newcommand{\Acal}{{\mathcal A}}
\newcommand{\Bcal}{{\mathcal B}}
\newcommand{\Ccal}{{\mathcal C}}
\newcommand{\Dcal}{{\mathcal D}}
\newcommand{\Ecal}{{\mathcal E}}
\newcommand{\Fcal}{{\mathcal F}}
\newcommand{\Gcal}{{\mathcal G}}
\newcommand{\Hcal}{{\mathcal H}}
\newcommand{\Ical}{{\mathcal I}}
\newcommand{\Jcal}{{\mathcal J}}
\newcommand{\Kcal}{{\mathcal K}}
\newcommand{\Lcal}{{\mathcal L}}
\newcommand{\Mcal}{{\mathcal M}}
\newcommand{\Ncal}{{\mathcal N}}
\newcommand{\Ocal}{{\mathcal O}}
\newcommand{\Pcal}{{\mathcal P}}
\newcommand{\Qcal}{{\mathcal Q}}
\newcommand{\Rcal}{{\mathcal R}}
\newcommand{\Scal}{{\mathcal S}}
\newcommand{\Tcal}{{\mathcal T}}
\newcommand{\Ucal}{{\mathcal U}}
\newcommand{\Vcal}{{\mathcal V}}
\newcommand{\Wcal}{{\mathcal W}}
\newcommand{\Xcal}{{\mathcal X}}
\newcommand{\Ycal}{{\mathcal Y}}
\newcommand{\Zcal}{{\mathcal Z}}

\renewcommand{\AA}{\mathbb{A}}
\newcommand{\BB}{\mathbb{B}}
\newcommand{\CC}{\mathbb{C}}
\newcommand{\FF}{\mathbb{F}}
\newcommand{\GG}{\mathbb{G}}
\newcommand{\NN}{\mathbb{N}}
\newcommand{\PP}{\mathbb{P}}
\newcommand{\QQ}{\mathbb{Q}}
\newcommand{\RR}{\mathbb{R}}
\newcommand{\ZZ}{\mathbb{Z}}

\newcommand{\bfa}{{\mathbf a}}
\newcommand{\bfb}{{\mathbf b}}
\newcommand{\bfc}{{\mathbf c}}
\newcommand{\bfe}{{\mathbf e}}
\newcommand{\bff}{{\mathbf f}}
\newcommand{\bfg}{{\mathbf g}}
\newcommand{\bfp}{{\mathbf p}}
\newcommand{\bfr}{{\mathbf r}}
\newcommand{\bfs}{{\mathbf s}}
\newcommand{\bft}{{\mathbf t}}
\newcommand{\bfu}{{\mathbf u}}
\newcommand{\bfv}{{\mathbf v}}
\newcommand{\bfw}{{\mathbf w}}
\newcommand{\bfx}{{\mathbf x}}
\newcommand{\bfy}{{\mathbf y}}
\newcommand{\bfz}{{\mathbf z}}
\newcommand{\bfA}{{\mathbf A}}
\newcommand{\bfF}{{\mathbf F}}
\newcommand{\bfB}{{\mathbf B}}
\newcommand{\bfD}{{\mathbf D}}
\newcommand{\bfG}{{\mathbf G}}
\newcommand{\bfI}{{\mathbf I}}
\newcommand{\bfM}{{\mathbf M}}
\newcommand{\bfzero}{{\boldsymbol{0}}}

\newcommand{\Adele}{\textsf{\upshape A}}
\newcommand{\Aut}{\operatorname{Aut}}
\newcommand{\Br}{\operatorname{Br}}  
\newcommand{\Closure}{\textsf{\upshape C}} 
\newcommand{\Disc}{\operatorname{Disc}}
\newcommand{\density}{{\boldsymbol\delta}}
\newcommand{\densitysup}{\overline{\density}}
\newcommand{\densityinf}{\underline{\density}}
\newcommand{\Div}{\operatorname{Div}}
\newcommand{\End}{\operatorname{End}}
\newcommand{\Fbar}{{\bar{F}}}
\newcommand{\FOD}{\textup{FOM}}
\newcommand{\FOM}{\textup{FOD}}
\newcommand{\Gal}{\operatorname{Gal}}
\newcommand{\GL}{\operatorname{GL}}
\newcommand{\Index}{\operatorname{Index}}
\newcommand{\Image}{\operatorname{Image}}
\newcommand{\liftable}{{\textup{liftable}}}
\newcommand{\hhat}{{\hat h}}
\newcommand{\Ksep}{K^{\textup{sep}}}
\newcommand{\Ker}{{\operatorname{ker}}}
\newcommand{\Lsep}{L^{\textup{sep}}}
\newcommand{\Lift}{\operatorname{Lift}}
\newcommand{\pp}{\operatorname{pp}}  
\newcommand{\vlim}{\operatornamewithlimits{\text{$v$}-lim}}
\newcommand{\wlim}{\operatornamewithlimits{\text{$w$}-lim}}
\newcommand{\MOD}[1]{~(\textup{mod}~#1)}
\newcommand{\Norm}{{\operatorname{\mathsf{N}}}}
\newcommand{\notdivide}{\nmid}
\newcommand{\normalsubgroup}{\triangleleft}
\newcommand{\odd}{{\operatorname{odd}}}
\newcommand{\onto}{\twoheadrightarrow}
\newcommand{\Orbit}{\mathcal{O}}
\newcommand{\ord}{\operatorname{ord}}
\newcommand{\Per}{\operatorname{Per}}
\newcommand{\PrePer}{\operatorname{PrePer}}
\newcommand{\PGL}{\operatorname{PGL}}
\newcommand{\Pic}{\operatorname{Pic}}
\newcommand{\Prob}{\operatorname{Prob}}
\newcommand{\Qbar}{{\bar{\QQ}}}
\newcommand{\rank}{\operatorname{rank}}
\newcommand{\Resultant}{\operatorname{Res}}
\renewcommand{\setminus}{\smallsetminus}
\newcommand{\SL}{\operatorname{SL}}
\newcommand{\Span}{\operatorname{Span}}
\newcommand{\tors}{{\textup{tors}}}
\newcommand{\Trace}{\operatorname{Trace}}
\newcommand{\twistedtimes}{\mathbin{%
   \mbox{$\vrule height 6pt depth0pt width.5pt\hspace{-2.2pt}\times$}}}
\newcommand{\UHP}{{\mathfrak{h}}}    
\newcommand{\Wreath}{\operatorname{Wreath}}
\newcommand{\<}{\langle}
\renewcommand{\>}{\rangle}

\newcommand{\longhookrightarrow}{\lhook\joinrel\longrightarrow}
\newcommand{\longonto}{\relbar\joinrel\twoheadrightarrow}
\newcommand{\pmodintext}[1]{~\textup{(mod~$#1$)}}

\newcommand{\Spec}{\operatorname{Spec}}
\renewcommand{\div}{{\operatorname{div}}}

\newcounter{CaseCount}
\Alph{CaseCount}
\def\Case#1{\par\vspace{1\jot}\noindent
\stepcounter{CaseCount}
\framebox{Case \Alph{CaseCount}.\enspace#1}
\par\vspace{1\jot}\noindent\ignorespaces}

\title[\TITLERUNNING]{\TITLE}
\date{\DATE}

\author[Joseph H. Silverman]{Joseph H. Silverman}
\address{Mathematics Department, Box 1917, 151 Thayer Street,
         Brown University, Providence, RI 02912 USA}
\email{jhs@math.brown.edu}
\author[Jos\'e Felipe Voloch]{Jos\'e Felipe Voloch}
\address{Department of Mathematics, University of Texas,
1 University Station C1200, Austin, TX 78712 USA}
\email{voloch@math.utexas.edu}
\subjclass{Primary: 11B37; Secondary:  11G99, 14G99, 37F10}
\keywords{arithmetic dynamical systems, local-global criterion,
Brauer--Manin ob\-struc\-tion}
\thanks{The first author's research supported by NSF DMS-0650017.}

\begin{abstract}
Let $K$ be a number field or a one-dimensional function field, let
$\f:\PP^1\to \PP^1$ be a rational map of degree at least two defined
over~$K$, let~$P\in \PP^1(K)$ be a point with infinite~$\f$-orbit, and
let~$Z\subset \PP^1$ be a finite set of points. We prove a
local-global criterion for the intersection of the $\f$-orbit of~$P$
and the finite set~$Z$.  This is a special case of a dynamical
Brauer-Manin criterion suggested by Hsia and Silverman.
\end{abstract}


\maketitle

\section*{Introduction}

A recent paper of Hsia and Silverman~\cite{hsiasilverman} describes an
analog of the Brauer--Manin obstruction in the setting of arithmetic
dynamics.  The definition is based on work of
Scharaschkin~\cite{scharaschkin}, who showed how to reformulate the
Brauer--Manin obstruction on curves of genus at least~$2$ as a purely
ad\`elic-geometric statement, with no reference to cohomology.  (See
also~\cite{volochlocalglobal}.)  In this note we prove a dynamical
analog of Scharaschkin's conjecture for dynamical systems
on~$\PP^1$. The proof uses a variety of tools, including a Zsigmondy
theorem for primitive divisors in dynamical systems that was recently
proven by Ingram and Silverman~\cite{IngSil}.

We recall from~\cite{hsiasilverman} the setup for the dynamical
version of the Brauer--Manin obstruction.

\begin{Notation}
\label{wanderingquestion}
Let~$K$ be a number field, let~$X/K$ be a projective variety, and let
\[
  \f:X\to X
\]
be a $K$-morphism of infinite order. Let~$\Adele_K$ denote the ring of
ad\`eles of~$K$, and for any point~$P\in X(K)$,
write~$\Closure\bigl(\Orbit_\f(P)\bigr)$ for the closure of the
orbit~$\Orbit_\f(P)$ of~$P$ in~$X(\Adele_K)$.  Let~$Z$ be a subvariety
of~$X$.
We clearly have an inclusion
\begin{equation}
  \label{BrMaInclusion}
  \Orbit_\f(P) \cap Z(K) \subset 
  \Closure\bigl(\Orbit_\f(P)\bigr) \cap Z(\Adele_K).
\end{equation}
The primary question raised in~\cite{hsiasilverman} is to ask when the
inclusion~\eqref{BrMaInclusion} is an equality.
\par
More generally, we can ask a similar question after excluding a (finite)
set of places. For any set of places~$S$ we let
\[
  \Adele'_{K,S}=\sideset{}{'}\prod_{v\notin S} K_v\subset\Adele_K,
\]
where the product is the usual adelic restricted product, and we write
$\Closure_S\bigl(\Orbit_\f(P)\bigr)$ for the closure of the orbit
in~$\Adele'_{K,S}$.
\end{Notation}

Our main result is a proof of a local-global
equality~\eqref{BrMaInclusion} for~$X=\PP^1$.

\begin{theorem}
\label{theorem:brmanobs}
Let~$K$ be a number field, let~$S$ be a finite set of places of~$K$,
and let~$\f:\PP^1\to\PP^1$ be a rational map of degree at least two
defined over~$K$. Let~$Z\subset\PP^1(K)$ be a finite set of
points. Then with notation as above,
\[
  \Orbit_\f(P) \cap Z(K) =
  \Closure_S\bigl(\Orbit_\f(P)\bigr) \cap Z(\Adele'_{K,S}).
\]
\end{theorem}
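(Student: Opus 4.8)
\smallskip\noindent\textbf{Strategy of proof.}
The inclusion ``$\subseteq$'' is the tautology recorded after Notation~\ref{wanderingquestion}, so everything is in the reverse inclusion. Since $Z$ is a finite set of $K$-rational points we have $Z(K_v)=Z$ for every $v$, so a point of $\Closure_S\bigl(\Orbit_\f(P)\bigr)\cap Z(\Adele'_{K,S})$ is a tuple $(z_v)_{v\notin S}$ with every $z_v\in Z$, and the assertion to prove is that any such tuple lying in the adelic closure of the orbit is the constant (``diagonal'') tuple attached to a single point of $\Orbit_\f(P)\cap Z$. If $P$ is preperiodic the orbit is finite, its adelic closure is the finite set of diagonal orbit points, and there is nothing to prove; so I assume $P$ wandering, so that the $\f^n(P)$ are pairwise distinct, and I freely conjugate $\f$ by an element of $\PGL_2(K)$ (which changes only finitely many places) to move a prescribed $K$-point to $0$ or $\infty$.

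The first ingredient is local $v$-adic dynamics. For a place $v$ of good reduction with $\gp_v$ outside a finite exceptional set, the reduction of $\Orbit_\f(P)$ modulo $\gp_v$ is eventually periodic, and a residue-disk analysis—governed by the derivative of the relevant iterate of $\f$ along the reduced cycle, which dictates whether the induced self-map of the disk is an isometry or a strict contraction—shows that the orbit can accumulate $v$-adically at a point only in very constrained ways, essentially only at (attracting limits of) $\f$-periodic points. Together with the pigeonhole principle over the finite set $Z$, this handles all coordinates $z_v$ that are neither points of $\Orbit_\f(P)$ nor $\f$-periodic; what remains is the arithmetically delicate case in which $z_v$ is an $\f$-periodic point of $Z$ not lying on $\Orbit_\f(P)$ (think of a super-attracting cycle), where the orbit really can come $v$-adically close to $Z$ at infinitely many places.

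The second ingredient, which settles that case, is the dynamical Zsigmondy theorem of Ingram and Silverman~\cite{IngSil}. Choose an auxiliary point $\zeta^\star\in\PP^1(K)$, generic enough that it is not $\f$-periodic, does not lie on $\Orbit_\f(P)$, is not in $Z$, and the primitive-divisor theorem of~\cite{IngSil} applies to the orbit of $P$ relative to $\zeta^\star$ (such $\zeta^\star$ exist by a height count). Then for every large $n$ the $\zeta^\star$-part of $\f^n(P)$ has a primitive prime divisor $\gp_n$; these primes are pairwise distinct, and $\f^n(P)\equiv\zeta^\star\pmod{\gp_n}$, so $\f^n(P)$ is bounded away from all of $Z$ at the place $v_{\gp_n}$. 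Feeding finite sets $\{v_{\gp_{n_0}},\dots,v_{\gp_N}\}$ into the definition of the adelic closure, any orbit index $n'$ witnessing the approximation of $(z_v)$ on such a set satisfies $\f^{n'}(P)\equiv z_{v_{\gp_n}}\pmod{\gp_n}$ for $n_0\le n\le N$, hence $n'\notin\{n_0,\dots,N\}$; letting $N\to\infty$ and using pigeonhole over $Z$ together with the reduction identity $\f^{n'}(P)\equiv\f^{\,n'-n}(\zeta^\star)\pmod{\gp_n}$ forces either a witnessing index in a fixed finite range—so that some $\f^{n'}(P)$ is actually equal to a point of $Z$, because a nonzero element of $K$ has only finitely many prime divisors—or a configuration whose invalidity follows by comparing the canonical height of the witnessing orbit point with the size of the product of the primitive primes $\gp_n$. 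In each case the adelic tuple is pinned to a single diagonal point of $\Orbit_\f(P)\cap Z$, and a last pigeonhole, combined with the separation of distinct orbit points at any one place, disposes of the finitely many exceptional places.

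The hard part is exactly this periodic case, and it is where the deepest input is needed: when a point of $Z$ lies on a super-attracting cycle of $\f$, the orbit of $P$ can converge $v$-adically into $Z$ at infinitely many places, so no purely local argument can suffice, and it is precisely the arithmetic ``spreading'' of the orbit over the infinitely many primes $\gp_n$—at which it is nowhere near $Z$—that obstructs a globally coherent adelic approximation taking values in $Z$. The function-field case is entirely analogous, using the function-field forms of the canonical-height formalism and of the primitive-divisor theorem.
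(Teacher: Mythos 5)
Your strategy correctly identifies the trivial inclusion, the reduction to wandering points, and the Ingram--Silverman primitive-divisor theorem as the key arithmetic input, but the way you deploy that input has a genuine gap, and it is precisely the spot where the paper's proof does something subtler.

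You propose to take an auxiliary point $\zeta^\star$ that is \emph{not} $\f$-periodic (indeed apparently non-preperiodic, since you produce it by a height count), and then use the primitive primes $\gp_n$ of $\f^n(P)-\zeta^\star$ together with the reduction identity $\f^{n'}(P)\equiv\f^{\,n'-n}(\zeta^\star)\pmod{\gp_n}$. Two problems. First, Theorem~\ref{theorem:dynzsig} requires the target point to be \emph{preperiodic} (and $\f$ not of polynomial type there); for a generic, non-preperiodic $\zeta^\star$ the primitive-divisor theorem simply does not apply, so your primes $\gp_n$ are not supplied by the cited result. Second, and more fundamentally, even granting the primes, your deduction $\f^{\,n'-n}(\zeta^\star)\equiv z_{v_{\gp_n}}\pmod{\gp_n}$ is only useful if $\f^{\,n'-n}(\zeta^\star)$ lands in a \emph{fixed finite set} independent of $n$ and $n'$: only then can you form a single nonzero element of $K$ divisible by all the $\gp_n$ and invoke ``finitely many prime divisors.'' With $\zeta^\star$ wandering, $\f^{\,n'-n}(\zeta^\star)$ runs through infinitely many points of $K$, so each congruence involves a different element of $K$ and no contradiction results. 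Your fallback, ``comparing the canonical height of the witnessing orbit point with the size of the product of the primitive primes,'' is not an argument; nothing in the setup controls the witnessing index $n'$ in terms of $T$, so there is no height inequality to compare against. This is exactly the gap the paper closes by choosing $\gamma$ to be a point of exact period $R\ge3$ (for which the polynomial-type obstruction is automatically avoided, by Remark~\ref{remark:poltype}). Then $\f^{\,n'-n}(\gamma)$ is always one of the $R$ points on the cycle, the congruences pile up on the \emph{fixed} nonzero quantity $\prod_{r=0}^{R-1}\prod_{a\in Z}\bigl(\f^r(\gamma)-a\bigr)$, and letting $T\to\infty$ forces that product to vanish, i.e.\ some $\f^{r_0}(\gamma)=a_0\in Z$. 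Running this over all periods $R$ then contradicts finiteness of $Z$. Finally, the ``first ingredient'' of your sketch (residue-disk/attracting-cycle analysis at good primes) is both undeveloped and unnecessary: the periodic-cycle argument just described handles every coordinate of the adelic point uniformly, so no local dynamical case analysis is required.
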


\begin{remark}
As noted in~\cite{hsiasilverman}, if~\eqref{BrMaInclusion} is to be an
equality, one should require that the subvariety~$Z$ contains no
positive dimensional $\f$-preperiodic subvarieties. However, since our
subvariety~$Z$ is a set of points, this condition on~$Z$ is vacuous.
\end{remark}

In Section~\ref{FunctionFieldAnalogs} we prove an analogous result for
function fields~$K/k$. If the map~$\f$ is not isotrivial, the proof is
essentially the same as the proof of Theorem~\ref{theorem:brmanobs},
but the isotrivial case must be handled separately.  Finally, in
Section~\ref{CommentsandSpeculations}, we make some further comments
and raise some related questions.

\section{Local-global dynamics on $\PP^1$ over number fields}
\label{section:P1unobstructed}

In this section we prove Theorem~\ref{theorem:brmanobs}.
We briefly recall some basic definitions from dynamical systems.

\begin{definition}
Let~$\f(z)\in K(z)$ be a rational function of degree~$d\ge2$, which we
may view as a morphism~\text{$\f:\PP^1_K\to\PP^1_K$}.  A
point~$\g\in\PP^1(\Kbar)$ is \emph{periodic for~$\f$} if~$\f^n(\g)=\g$
for some~$n\ge1$.  The smallest such~$n$ is called
the~\emph{$\f$-period of~$\g$}.  We say that~$\g$ is \emph{preperiodic
for~$\f$} if its \emph{$\f$-orbit}
\[
  \Orbit_\f(\g) =  \bigl\{\g,\f(\g),\f^2(\g),\dots\bigr\}
\]
is finite. Equivalently,~$\g$ is preperiodic
if some iterate~$\f^n(\g)$ is periodic.
\end{definition}

\begin{remark}
During the proof of Theorem~\ref{theorem:brmanobs}, we make use of the
following notation.  For any two points~$x,y\in\PP^1(K)$ and any prime
ideal~$\gp$ of~$K$, we write
\begin{equation}
  \label{x=ymodp}
  x\equiv y\pmod{\gp}
\end{equation}
to indicate that the reductions of~$x$ and~$y$ modulo~$\gp$ coincide.
Formally, this notation means that if we write~$x=[x_1,x_2]$
and~$y=[y_1,y_2]$ using $\gp$-integral coordinates such that at least
one coordinate is a~$\gp$-unit, then
\[
  x_1y_2-x_2y_1\equiv 0\pmod{\gp}.
\]
\par
Alternatively, we define a (nonarchimedean) chordal metric
on~$\PP^1$ for the absolute value~$v$ by (cf.\ \cite[\S2.1]{silverman:ads})
\[
  \D_v(x,y) = \frac{|x_1y_2-x_1y_1|_v}
    {\max\bigl\{|x_1|_v,|x_2|_v\bigr\}\cdot\max\bigl\{|y_1|_v,|y_2|_v\bigr\}}.
\]
This is independent of the choice of homogeneous coordinates for~$x$
and~$y$, and we define the congruence~\eqref{x=ymodp} to
mean~$\D_v(x,y)<1$, where~$v$ is the absolute value associated to the
prime~$\gp$.  More generally, we write
\[
  \prod (\a_i-\b_i) \equiv 0\pmod{\gp}
\]
for points~$\a_i,\b_i\in\PP^1(K)$ to mean that~$\prod\D_v(\a_i,\b_i)<1$.
\end{remark}

\begin{proof}[Proof of Theorem~$\ref{theorem:brmanobs}$]
Without loss of generality, we may assume that the set~$S$ contains all
archimedean places of~$K$ and all nonarchimedean places at which~$\f$
has bad reduction (see~\cite[\S2.5]{silverman:ads}).
Let~$\b\in\PP^1(K)$. If $\b$ is preperiodic, then its
orbit~$\Orbit_\f(\b)$ is finite, so the orbit is discrete
in~$\PP^1(K_v)$ for every place~$v$. Hence in this case we
have~$\Closure\bigl(\Orbit_\f(\b)\bigr)=\Orbit_\f(\b)$, so the
equality
\[
  \Orbit_\f(\b) \cap Z(K) 
    = \Closure\bigl(\Orbit_\f(\b)\bigr) \cap Z(\Adele_K).
\]
is obvious.

We suppose now that~$\b\in\PP^1(K)$ is a point with infinite
$\f$-orbit.  Since~$Z$ is a finite set, there can be only finitely
many iterates~$\f^n(\b)$ that lie in~$Z$, so replacing~$\b$ with
some~$\f^n(\b)$, we may assume that \text{$\Orbit_\f(\b)\cap
Z(K)=\emptyset$}. Under this assumption, we suppose that there exists a
point
\[
  \a\in\Closure_S\bigl(\Orbit_\f(\b)\bigr) \cap Z(\Adele'_{K,S})
\]
and derive a contradiction. The supposed existence of~$\a$
means that there is an increasing sequence of positive integers
\[
  \Ncal = \{n_1,n_2,n_3,\ldots\}
\]
such that the sequence of points~$\f^n(\b)$ with~$n\in\Ncal$ converges
adelically to~$\a\in Z(\Adele'_{K,S})$. Looking at each coordinate of
the adele~$\a$, we see in particular that for every prime~$\gp\notin
S$ of~$K$ there exists an integer~$N_\gp$ such that
\[
  \f^n(\b) \equiv \a_\gp \pmod{\gp}
  \qquad\text{for all $n\in\Ncal$ with $n\ge N_\gp$.}
\]
Note that since~$Z$ is a finite subset of~$\PP^1(K)$, each~$\a_\gp$ is
a point in~$\PP^1(K)$ considered as a subset of~$\PP^1(K_\gp)$.

We are going to use the dynamical Zsigmondy theorem from~\cite{IngSil},
whose statement we recall after making one definition.

\begin{definition}
Let~$\f:\PP^1\to\PP^1$ and let~$\g\in\PP^1$. We say that~$\f$ is of
\emph{polynomial type at~$\g$} if there is some~$k\ge1$ such that~$\g$
is a totally ramified fixed point of~$\f^k$. 
\end{definition}

\begin{remark}
\label{remark:poltype}
It is easy to check that~$\f$ is of polynomial type at~$\g$ if and
only if, when we conjugate~$\f$ by~$(z-\g)^{-1}$, the map~$\f^k$ is
conjugated to a polynomial in~$z$.  We also remark that it is an
exercise using the Riemann-Hurwitz genus formula to show that if~$\f$
is of polynomial type at~$\g$, then the~$\f$-period of~$\g$ is at
most~$2$, cf.~\cite[Theorem~1.7]{silverman:ads}.
\end{remark}

\begin{theorem}
\label{theorem:dynzsig}
\textup{(Ingram--Silverman \cite{IngSil})}\enspace Let~$K$ be a number
field, let~$S$ be a finite set of primes, let~$\b\in\PP^1(K)$ be a
point with infinite orbit, and let~$\g\in\PP^1(K)$ be a preperiodic
point.  Let~$\f:\PP^1\to\PP^1$ be a $K$-morphism of degree at least
two that is not of polynomial type at~$\g$. There exists an
integer~$M=M_{\f,\b,\g}$ so that for all~$m\ge M$ there exists a prime
ideal~$\gq_m\notin S$ of~$K$ satisfying
\begin{align*}
  \f^m(\b)&\equiv\g\pmod{\gq_m}, \\
  \f^i(\b)&\not\equiv\g\pmod{\gq_m}
  \quad\text{for all $0\le i<m$.}
\end{align*}
\end{theorem}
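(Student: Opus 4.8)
Each~$\gq_m$ will be produced as a \emph{primitive prime divisor} of a divisibility-type sequence attached to the orbit of~$\b$, its existence being forced by comparing a lower bound for the whole sequence against an upper bound for its imprimitive part. We may enlarge~$S$ to contain all archimedean places, all primes of bad reduction for~$\f$, and finitely many further primes named below; this does not weaken the statement, and~$\g\notin\Orbit_\f(\b)$ automatically since~$\b$ has infinite orbit while~$\Orbit_\f(\g)$ is finite. If~$\g$ is periodic of period~$p\ge 2$, replace~$\f$ by~$\f^p$ — which fixes~$\g$, is still not of polynomial type at~$\g$, and has $e_\g(\f^p)=\prod_{i=0}^{p-1}e_{\f^i\g}(\f)<d^p$ — run the argument below for~$\f^p$ and each starting point~$\f^r(\b)$, $0\le r<p$, and reassemble (throwing into~$S$ the finitely many primes~$\gp$ with~$\f^j\g\equiv\g\pmod\gp$ for some~$1\le j<p$); if~$\g$ is strictly preperiodic one argues directly. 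So assume~$\g$ is a fixed point with $e:=e_\g(\f)<d$; put a coordinate with~$\g=0$, write $\f(z)=z^{e}u(z)$ with~$u(0)\ne 0$, enlarge~$S$ so that~$u$ is $S$-integral with~$u(0)$ an $S$-unit, and set $D_m=\prod_{\gp\notin S}\gp^{\max(0,\,v_\gp(\f^m\b))}$, so $\gp\mid D_m$ exactly when $\f^m(\b)\equiv\g\pmod\gp$.

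\par Since~$\g$ is fixed and~$\gp\notin S$ has good reduction, $\f^i(\b)\equiv\g\pmod\gp$ forces $\f^{i'}(\b)\equiv\g\pmod\gp$ for all~$i'\ge i$; hence a prime dividing~$D_m$ but not~$D_{m-1}$ is exactly a~$\gq_m$ as required. On the support of~$D_{m-1}$ one computes $v_\gp(\f^m\b)=e\,v_\gp(\f^{m-1}\b)$ from $\f(z)=z^eu(z)$, so the part~$D_m^{\mathrm{imp}}$ of~$D_m$ supported on primes of~$D_{m-1}$ satisfies $\log\Norm D_m^{\mathrm{imp}}=e\log\Norm D_{m-1}$. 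Writing~$\l_{\g,v}$ for a local height at~$\g$, normalized so that~$\l_{\g,v}\ge 0$ and~$\sum_{\text{all }v}\l_{\g,v}=[K:\QQ]\,h+O(1)$ on~$\PP^1(K)$, we get
\[
  \log\Norm D_m=\sum_{\gp\notin S}\l_{\g,\gp}(\f^m\b)=[K:\QQ]\,\hhat_\f(\b)\,d^{\,m}-\Lambda_m+O(1),
  \qquad \Lambda_m:=\sum_{v\in S}\l_{\g,v}(\f^m\b)\ge 0,
\]
with~$\hhat_\f(\b)>0$. Granting the estimate~$\Lambda_m=o(d^{\,m})$ (proved below), one obtains $\log\Norm(D_m/D_m^{\mathrm{imp}})=\log\Norm D_m-e\log\Norm D_{m-1}\ge(d-e)[K:\QQ]\hhat_\f(\b)\,d^{\,m-1}-o(d^{\,m})>0$ for all large~$m$, since~$d-e\ge 1$; so~$D_m$ has a primitive prime~$\gq_m$, which satisfies $\f^m(\b)\equiv\g\pmod{\gq_m}$ and $\f^i(\b)\not\equiv\g\pmod{\gq_m}$ for $0\le i<m$. (For strictly preperiodic~$\g$ the same local analysis shows each~$\gp\notin S$ divides at most one~$D_i$, so every prime factor of~$D_m$ is primitive and it suffices that $\log\Norm D_m\to\infty$, i.e.\ $D_m\ne(1)$ eventually.) Undoing the reduction to~$\f^p$ gives the theorem.

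\par It remains to prove~$\Lambda_m=o(d^{\,m})$. Fix~$v\in S$ and~$k\ge 1$. By functoriality of local heights for $(\f^k)^{*}[\g]=\sum_{P\in\f^{-k}(\g)}e^{(k)}_{P}[P]$, and since~$\f^{m-k}(\b)$ can be~$v$-close to at most one point of~$\f^{-k}(\g)$, the nearest such point~$P^{*}$ satisfies
\[
  \l_{\g,v}(\f^m\b)=e^{(k)}_{P^{*}}\l_{P^{*},v}(\f^{m-k}\b)+O_k(1)\le e^{(k)}_{P^{*}}[K:\QQ]\,h(\f^{m-k}\b)+O_k(1),
\]
where~$e^{(k)}_{P^{*}}=\prod_{i=0}^{k-1}e_{\f^i(P^{*})}(\f)$ is the ramification index of~$\f^k$ at~$P^{*}$. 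By the Riemann--Hurwitz formula~$\f$ has at most two totally ramified values, so along the backward branch $P^{*}\to\f(P^{*})\to\cdots\to\f^k(P^{*})=\g$ at most two steps can carry local ramification~$d$; and since~$\g$ is fixed, any repeated point on this branch must be~$\g$ itself, where the ramification equals~$e<d$ (for strictly preperiodic~$\g$ no repetition can occur, as it would force~$\g$ periodic). Therefore $e^{(k)}_{P^{*}}\le d^{2}(d-1)^{\,k-2}$, and with $h(\f^{m-k}\b)=d^{\,m-k}\hhat_\f(\b)+O(1)$,
\[
  \l_{\g,v}(\f^m\b)\le C\Bigl(\tfrac{d-1}{d}\Bigr)^{k}d^{\,m}+O_k(1).
\]
Summing over the finitely many~$v\in S$ and taking~$k=k(m)\to\infty$ slowly (e.g.~$k(m)=\lfloor\tfrac12\log_d m\rfloor$, so that the error stays~$o(d^{\,m})$) gives~$\Lambda_m=o(d^{\,m})$.

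\par The main obstacle is this last estimate — bounding, uniformly over the finitely many places of~$S$, how fast the wandering orbit~$\{\f^m(\b)\}$ can approach the preperiodic point~$\g$. It is exactly here that the hypothesis ``$\f$ not of polynomial type at~$\g$'' is indispensable: without it the ramification $e^{(k)}_{P^{*}}$ along backward branches of~$\g$ could be as large as~$d^{k}$, and the comparison in the second paragraph would collapse.
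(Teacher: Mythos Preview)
The paper does not give its own proof of Theorem~\ref{theorem:dynzsig}; it quotes the result from Ingram--Silverman~\cite{IngSil} and remarks only that ``the proof of Theorem~\ref{theorem:dynzsig} uses a dynamical analog~\cite{MR1240603} of Siegel's theorem for integral points on elliptic curves,'' together with a short observation that one may enlarge~$M$ to discard the finitely many~$m$ whose primitive divisor lands in~$S$. So there is no proof in the paper to compare against line by line.

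Your sketch is, in outline, exactly the Ingram--Silverman argument: compare a canonical-height lower bound for~$\log\Norm D_m$ against the imprimitive part~$\log\Norm D_m^{\mathrm{imp}}=e\log\Norm D_{m-1}$ and use~$e<d$. The interesting divergence is in how you handle the key estimate~$\Lambda_m=\sum_{v\in S}\l_{\g,v}(\f^m\b)=o(d^{\,m})$. The paper (following~\cite{IngSil}) attributes this to the dynamical Siegel/Roth theorem of~\cite{MR1240603}, which works for arbitrary~$\g$. Your ramification argument --- pulling back through~$\f^k$ and bounding $e^{(k)}_{P^*}\le d^{2}(d-1)^{k-2}$ via Riemann--Hurwitz and the observation that repeats along the backward branch must equal the fixed point~$\g$ --- is more elementary and avoids Roth entirely, but it genuinely uses the preperiodicity of~$\g$ (for a wandering~$\g$ one cannot rule out repeated totally ramified points along backward branches, and Roth is needed). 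That is a nice sharpening of the exposition for the case at hand.

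One small imprecision: your final step lets~$k=k(m)\to\infty$ with the specific choice $k(m)=\lfloor\tfrac12\log_d m\rfloor$, asserting that ``the error stays~$o(d^{\,m})$.'' This requires an explicit bound on how the~$O_k(1)$ term grows with~$k$ (it involves the number of points in~$\f^{-k}(\g)$, the minimum $v$-adic distance between them, and the height constants for each~$P^*$), which you have not supplied. The cleaner and fully rigorous way to finish is to keep~$k$ \emph{fixed}, deduce $\limsup_{m\to\infty}\Lambda_m/d^{\,m}\le C\bigl(\tfrac{d-1}{d}\bigr)^{k}$, and then let~$k\to\infty$; this yields~$\Lambda_m=o(d^{\,m})$ without any control on the~$k$-dependence of the error. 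With that adjustment your sketch is correct.
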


In the terminology of~\cite{IngSil}, the prime ideal~$\gp_m$ is a
\emph{primitive divisor}
for~$\f^m(\b)-\g$. Theorem~\ref{theorem:dynzsig} says
that~$\f^m(\b)-\g$ has a primitive divisor for all sufficiently large
values of~$m$.  There are many classical theorems showing the
existence of primitive divisors for the multiplicative group and for
elliptic curves, see for example~\cite{MR0344221,MR961918,MR1546236}.
The proof of Theorem~\ref{theorem:dynzsig} uses a dynamical
analog~\cite{MR1240603} of Siegel's theorem~\cite[IX.3.1]{MR1329092}
for integral points on elliptic curves.

\begin{remark}
The theorem stated in~\cite{IngSil} does not exclude a finite set of
primes, but since~$S$ is finite, we immediately get
Theorem~\ref{theorem:dynzsig} from~\cite{IngSil} by increasing~$M$ so
as to exclude the finitely many~$m$ such that the primitive divisor
of~$\f^m(\b)-\g$ is in~$S$.
\end{remark}

We fix an integer~$R\ge3$ and extend the field~$K$ so that all of
the periodic points of~$\f$ of exact period~$R$ are in~$\PP^1(K)$.
Let~$\g\in\PP^1(K)$ be a point of exact period~$R$ for~$\f$. As noted
in Remark~\ref{remark:poltype}, the map~$\f$ is not of polynomial type
at~$\g$, so we can apply Theorem~\ref{theorem:dynzsig} to find an
integer~$M=M_{\f,\b,\g}$ such that~$\f^m(\b)-\g$ has a non-$S$ primitive
divisor for all~$m\ge M$. Doing this for each of the finitely many
points of period~$R$, we may choose one~$M=M_{\f,\b,R}$ that works for
all~$\g$ of period~$R$. 

Next we fix an integer~$T\ge1$ and we choose a non-$S$ primitive
divisor for~$\f^m(\b)-\g$ for each~$m$ between~$M$ and~$M+T$. Thus we
find \emph{distinct} prime
ideals~$\gq_{M,\g},\gq_{M+1,\g},\ldots,\gq_{M+T,\g}\notin S$ such
that
\[
  \f^m(\b)\equiv\g\pmod{\gq_{m,\g}}
  \quad\text{for~$M\le m\le M+T$.}
\]
We have indicated the dependence of~$\gq_{m,\g}$ on both~$m$ and~$\g$,
because we now replace~$\g$ by~$\f^i\g$ for each~$1\le i<R$ and apply
the same argument. Thus we find prime ideals~$\gq_{m,\f^i\g}\notin S$
satisfying
\[
  \f^m(\b)\equiv\f^i(\g)\pmod{\gq_{m,\f^i\g}}
  \quad\text{for~$M\le m\le M+T$ and $0\le i<R$.}
\]
Further, for any particular value of~$i$, the ideals~$\gq_{m,\f^i\g}$
are distinct for~$M\le m\le M+T$. (These ideals also depend on~$\f$
and~$\b$, of course, but~$\f$ and~$\b$ are fixed throughout the proof.)
This gives us a finite set of non-$S$ primes
\[
  \Qcal = \Qcal_{R,T} = \bigl\{\gq_{m,\f^i\g} : 
    \text{$M\le m\le M+T$ and $0\le i<R$} \bigr\}.
\]
(Note that~$M=M_{\f,\b,R}$ is independent of~$T$.)

Consider the product
\begin{equation}
  \label{eqn:prodfmbfrg}
  \prod_{r=0}^{R-1} \bigl(\f^m(\b)-\f^r(\g)\bigr).
\end{equation}
It is divisible by each of the (not necessarily distinct) prime
ideals
\begin{equation}
  \label{eqn:gqlist}
  \gq_{m,\g}, \gq_{m,\f\g}, \gq_{m,\f^2\g},\cdots,
   \gq_{m,\f^{R-1}\g}.
\end{equation}
Further, the fact that~$\g$ is periodic with period~$R$ implies
that for any~$n\ge m$, the product
\begin{equation}
  \label{eqn:prodfnbfrg}
  \prod_{r=0}^{R-1} \bigl(\f^n(\b)-\f^r(\g)\bigr).
\end{equation}
is divisible by all of the primes in the list~\eqref{eqn:gqlist}.
To see this, let~$\gq$ be any prime listed in~\eqref{eqn:gqlist}.
Then we use the $R$-periodicity of~$\g$ to write
\[
  \prod_{r=0}^{R-1} \bigl(\f^n(\b)-\f^r(\g)\bigr)
  = \prod_{r=0}^{R-1} \bigl(\f^n(\b)-\f^{r+1}(\g)\bigr),
\]
and this last product is clearly divisible by
\[
  \prod_{r=0}^{R-1} \bigl(\f^{n-1}(\b)-\f^r(\g)\bigr).
\]
(We are using the general fact that~$\f(b)-\f(a)$ is divisible
by~$b-a$.)  Now a downward induction shows that~\eqref{eqn:prodfnbfrg}
is divisible by~\eqref{eqn:prodfmbfrg}.

Recall the sequence of positive integer~$\Ncal$ such that~$\f^n(\b)$
converges $\Adele'_{K,S}$- adelically to~$\a$ as~$n\to\infty$
with~$n\in\Ncal$.  In particular, for each $\gq\in\Qcal$ we have
\begin{equation}
  \label{eqn:fnb=aqmodq}
  \f^n(\b)\equiv \a_\gq\pmod{\gq}
\end{equation}
for all sufficiently large~$n\in\Ncal$.  
(Note that $\Qcal\cap S=\emptyset$.)
We recall that each~$\a_\gq$ is chosen from the finite set~$Z$, so we
can reformulate~\eqref{eqn:fnb=aqmodq} as
\begin{equation}
  \label{eqn:prodfnbaqmodq}
  \prod_{a\in Z} \bigl(\f^n(\b)-a\bigr)\equiv 0\pmod{\gq}
\end{equation}
for sufficiently large~$n\in\Ncal$ and for every~$\gq\in\Qcal$.

For notational convenience, we define
\[
  \gQ_{R,T} = \prod_{\gq\in\Qcal_{R,T}} \gq
  = \operatorname{Radical}
     \left(\prod_{M\le m\le M+T}\prod_{0\le i<R} \gq_{m,\f^i\g}\right).
\]
The fact that the ideals~$\gq_{m,\f^i\g}$ are distinct for fixed~$i$
and varying~$m$ implies that
\[
  \#\Qcal_{R,T} > T,
\]
so~$\gQ_{R,T}$ is a product of more than~$T$ distinct prime ideals.

With this notation, we can rewrite~\eqref{eqn:prodfnbaqmodq} as
\begin{equation}
  \label{eqn:prodfnbaqmodq1}
  \prod_{a\in Z} \bigl(\f^n(\b)-a\bigr)\equiv 0\pmod{\gQ_{R,T}}
  \qquad\text{for sufficiently large $n\in\Ncal$.}
\end{equation}
Similarly, it follows from our earlier discussion that
\begin{equation}
  \label{eqn:Rfnbrg}
  \prod_{r=0}^{R-1} \bigl(\f^n(\b)-\f^r(\g)\bigr)
  \equiv 0\pmod{\gQ_{R,T}}
  \qquad\text{for all $n\ge M_R$.}
\end{equation}
Comparing~\eqref{eqn:prodfnbaqmodq1} and~\eqref{eqn:Rfnbrg}
for any single sufficiently large value of~$n\in\Ncal$,
we conclude that
\begin{equation}
  \label{eqn:rRaZ}
  \prod_{r=0}^{R-1} \prod_{a\in Z} \bigl(\f^r(\g)-a\bigr)
  \equiv 0\pmod{\gQ_{R,T}}.
\end{equation}

This last congruence is very interesting, because the product on 
the left-hand side does not depend on~$T$, while the ideal~$\gQ_{R,T}$
is a product of more than~$T$ distinct prime ideals. Letting~$T\to\infty$,
we conclude that the product in~\eqref{eqn:rRaZ} vanishes. Therefore 
there is at least one value~$r_0$ and at least one point~$a_0\in Z$ such
that
\[
  \f^{r_0}(\g) = a_0.
\]
To recapitulate, we have proven that given any point~$\g\in\PP^1$ that
is periodic for~$\f$ of period at least~$3$, there is some point in
the $\f$-orbit of~$\g$ that lies in the set~$Z$. But a rational map
on~$\PP^1$ has periodic points of infinitely many periods (more
precisely, it has a periodic point of exact period~$R$ for
every~$R\ge5$, see~\cite{MR1128089}), which contradicts the
assumption that~$Z$ is a finite set.
\end{proof} 

\section{Local-global dynamics on $\PP^1$ over function fields}
\label{FunctionFieldAnalogs}

In this section we prove the analogue of
Theorem~\ref{theorem:brmanobs} for function fields.  Let~$k$ be an
algebraically closed field and let~$K/k$ be a function field, that
is,~$K$ is a finitely generated extension of~$k$ of transcendence
degree one.  We employ the same notation as in the introduction except
that we consider only the places of~$K/k$, i.e., the absolute values
of~$K$ that are trivial on~$k$.

\begin{theorem}
\label{theorem:brmanobsf}
Let~$K/k$ be a function field as above, let~$S$ be a finite set of
places of~$K/k$, and let~$\f:\PP^1\to\PP^1$ be a rational map of
degree at least two defined over~$K$. Let~$Z\subset\PP^1(K)$ be a
finite set of points. Then with notation as in the introduction,
\[
  \Orbit_\f(P) \cap Z(K) =
  \Closure_S\bigl(\Orbit_\f(P)\bigr) \cap Z(\Adele'_{K,S}).
\]
\end{theorem}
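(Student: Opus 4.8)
The plan is to follow the proof of Theorem~\ref{theorem:brmanobs} as closely as possible, separating the cases in which~$\f$ is or is not \emph{isotrivial} (i.e.\ becomes, after conjugation by an element of~$\PGL_2(\Kbar)$, a map with coefficients in the constant field~$k$). First I would carry out the same preliminary reductions as over number fields: enlarge~$S$ so that it contains all places of bad reduction; dispose of preperiodic~$\b$, for which the orbit is finite and the equality is immediate; and for~$\b\in\PP^1(K)$ with infinite orbit, replace~$\b$ by an iterate so that~$\Orbit_\f(\b)\cap Z(K)=\emptyset$, then assume for contradiction that some~$\a\in\Closure_S\bigl(\Orbit_\f(\b)\bigr)\cap Z(\Adele'_{K,S})$ exists, together with an increasing sequence~$\Ncal$ along which~$\f^n(\b)\to\a$ adelically, so that~$\f^n(\b)\equiv\a_v\pmod v$ with~$\a_v\in Z$ for every place~$v\notin S$ and every large~$n\in\Ncal$. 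Note that passing to a finite extension~$K'/K$ and conjugating~$\f$ by an element of~$\PGL_2(K')$ is harmless here: it affects neither the existence of~$\a$ nor, since~$Z$ consists of finitely many~$K$-rational points and hence satisfies~$Z(K')=Z$, the emptiness of~$\Orbit_\f(\b)\cap Z(K)$.

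If~$\f$ is not isotrivial, the argument of Theorem~\ref{theorem:brmanobs} goes through essentially verbatim, now using the function-field analogue of the dynamical Zsigmondy theorem (Theorem~\ref{theorem:dynzsig}), which for non-isotrivial~$\f$ and~$\b$ of infinite orbit is proved by the same method, via a function-field version of Siegel's theorem. Thus, for each~$R\ge3$ one extends~$K$ so that all~$\f$-periodic points of exact period~$R$ are rational, fixes such a~$\g$ (at which~$\f$ is not of polynomial type, by Remark~\ref{remark:poltype}), produces for~$M\le m\le M+T$ distinct non-$S$ primitive divisors~$\gq_{m,\f^i\g}$ of~$\f^m(\b)-\f^i(\g)$ for~$0\le i<R$, and then the divisibility properties of~$\prod_{r=0}^{R-1}\bigl(\f^n(\b)-\f^r(\g)\bigr)$, the congruences coming from~$\a$, and the limit~$T\to\infty$ force~$\f^{r_0}(\g)=a_0$ for some~$r_0$ and some~$a_0\in Z$. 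Since a rational map of degree~$\ge2$ has periodic points of infinitely many exact periods \cite{MR1128089}, this contradicts the finiteness of~$Z$.

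Now suppose~$\f$ is isotrivial. After a finite extension of~$K$ and a conjugation (both harmless, by the above) we may assume~$\f\in k(z)$; then~$\f$ has good reduction at every place of~$K/k$, and all of its periodic points lie in~$\PP^1(k)$. \emph{Case 1:~$\b\in\PP^1(k)$.} Then every~$\f^n(\b)$ is a constant, and these are pairwise distinct since~$\b$ has infinite orbit; but for any place~$v$ of~$K/k$ and any distinct~$c_1,c_2\in k$ the difference~$c_1-c_2$ is a~$v$-adic unit, so~$\D_v(c_1,c_2)=1$, and consequently the sequence~$\bigl(\f^n(\b)\bigr)_{n\in\Ncal}$ has no convergent subsequence at any place. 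Hence~$\Closure_S\bigl(\Orbit_\f(\b)\bigr)=\Orbit_\f(\b)$ and the asserted equality follows exactly as in the preperiodic case, contradicting the existence of~$\a$. \emph{Case 2:~$\b\notin\PP^1(k)$,} so that~$\b$ is a non-constant morphism~$\b\colon C\to\PP^1$, where~$K=k(C)$. Here I would replace the appeal to Theorem~\ref{theorem:dynzsig} by a direct construction of primitive divisors: for a periodic~$\g\in\PP^1(k)$ of exact period~$R\ge3$ and each~$m$, a count on the backward orbit tree of~$\g$ shows that the set of~$x\in\PP^1(k)$ with~$\f^m(x)=\g$ but~$\f^i(x)\ne\g$ for all~$i<m$ is nonempty (in fact of size~$\sim d^m$), and since~$\b$ is surjective with finite fibers while~$S$ is finite, for~$m$ large one can pick such an~$x_m$ with~$\b^{-1}(x_m)$ containing a place~$v_m\notin S$, the~$v_m$ being pairwise distinct; good reduction of~$\f$ then gives~$\f^m(\b)\equiv\g\pmod{v_m}$ and~$\f^i(\b)\equiv\f^i(x_m)\not\equiv\g\pmod{v_m}$ for~$i<m$, so~$v_m$ is a primitive divisor of~$\f^m(\b)-\g$. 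Performing this for~$\g,\f\g,\dots,\f^{R-1}\g$ builds the finite set~$\Qcal_{R,T}$ with more than~$T$ elements, and the remainder of the proof---the two divisibility statements for~$\prod_{a\in Z}\bigl(\f^n(\b)-a\bigr)$ and~$\prod_{r}\bigl(\f^n(\b)-\f^r(\g)\bigr)$ modulo~$\gQ_{R,T}$, and the limit~$T\to\infty$---is identical to the number-field case and again contradicts the finiteness of~$Z$.

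I expect \emph{Case 2} above to be the main obstacle, since the dynamical Zsigmondy theorem is not available there as stated and must be replaced by the direct argument just sketched; one must also check that, after conjugation, a non-constant~$\b$ really does define a \emph{surjective} morphism~$C\to\PP^1$ (so that the preimages~$x_m$ can be pulled back to places of~$K$) and that~$\f$, having coefficients in~$k$, has good reduction everywhere. A secondary point to verify, if~$k$ has characteristic~$p>0$, is that the inputs borrowed from the number-field proof---the Riemann--Hurwitz argument underlying Remark~\ref{remark:poltype} and the existence of periodic points of all large exact periods---remain valid; the argument of Case~1 is characteristic-free.
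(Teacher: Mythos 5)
Your non-isotrivial case matches the paper's (both invoke the function-field Zsigmondy theorem via Baker and T.~Tucker's observation), and your \emph{Case 1} ($\b\in\PP^1(k)$) is a correct reformulation of the paper's observation that $\b\notin k$, so no issues there.

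For the isotrivial case with $\b\notin\PP^1(k)$ you take a genuinely different route: you build primitive divisors directly from the backward orbit tree of a periodic point $\g$, pulling back points of exact level $m$ over $\g$ through the surjective morphism $\b\colon C\to\PP^1$. This is an attractive alternative, but it contains a real gap that you flag only obliquely. Your construction rests on the claim that, for each $m$, there exists $x\in\PP^1(\kbar)$ with $\f^m(x)=\g$ and $\f^i(x)\ne\g$ for $i<m$; this requires the backward orbit of $\g$ to be infinite, which fails precisely when $\f$ is purely inseparable. If $\f(z)=z^q$, then $\f^{-1}$ is set-theoretically a bijection, the backward orbit of a period-$R$ point $\g\in\FF_{q^R}$ is just its own $R$-cycle, and for $m\ge R$ no such $x$ exists (the unique $q^m$-th root of $\g$ lands back on the cycle before level $m$). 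So the ``size $\sim d^m$'' count is wrong whenever $\f$ is inseparable, and for purely inseparable $\f$ you get only finitely many primitive divisors, which is not enough for the $T\to\infty$ argument. The paper handles this by splitting the isotrivial case into purely inseparable and not: in the latter case it also exploits an infinite backward orbit inside $\PP^1(k)$ (though via a single well-chosen place rather than a Zsigmondy-type family), while the purely inseparable case gets a short separate argument observing that $\f^{n}(\b)\equiv a^{q^{n}}\pmod\gp$ for $\gp$ above $\b-a$ can only converge if $a$ is preperiodic under Frobenius, and only finitely many such $a$ have Frobenius orbits meeting $Z$. Your proof needs the analogous case split and a separate treatment of $\f$ purely inseparable; the ``secondary points to verify in characteristic $p$'' that you list do not include the one that actually breaks.
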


\begin{proof}
A rational map~$\f$ defined over~$K$ is isotrivial (or split) if there
exists a M\"obius transformation~$M \in \PGL_2(\Kbar)$ such that
$M\circ \f \circ M^{-1}$ has coefficients in~$k$.  As pointed out by
T. Tucker~\cite[Remark~4]{IngSil}, the Zsigmondy theorem
in~\cite{IngSil} can be proved for non-isotrivial rational maps over
function fields using the results of Baker~\cite{Baker}. We observe
that the characteristic zero assumption is not used in~\cite{Baker},
and therefore is not needed to extend the results of~\cite{IngSil},
contrary to what is stated there. Thus the argument used to prove
Theorem~\ref{theorem:brmanobs} applies verbatim in the function field
case if~$\f$ is non-isotrivial.

Assume now that~$\f$ is isotrivial. There is no loss of generality in
extending~$K$ by adjoining the coefficients of~$M$, and then
replacing~$\f$ by $M\circ \f \circ M^{-1}$, we may assume that~$\f$
has coefficients in~$k$.

Arguing again as in the proof of Theorem~\ref{theorem:brmanobs}, the
case of preperiodic points is straightforward. 

We suppose now that~$\b\in\PP^1(K)$ is a point with infinite
$\f$-orbit.  Since~$Z$ is a finite set, there can be only finitely
many iterates~$\f^n(\b)$ that lie in~$Z$, so replacing~$\b$ with
some~$\f^n(\b)$, we may assume that \text{$\Orbit_\f(\b)\cap
Z=\emptyset$}. Under this assumption, we suppose that there exists a
point
\[
  \a\in\Closure_S\bigl(\Orbit_\f(\b)\bigr) \cap Z(\Adele'_{K,S})
\]
and derive a contradiction. The supposed existence of~$\a$
means that there is an increasing sequence of positive integers
\[
  \Ncal = \{n_1,n_2,n_3,\ldots\}
\]
such that the sequence of points~$\f^n(\b)$ with~$n\in\Ncal$ converges
adelically to~$\a\in Z(\Adele'_{K,S})$. 

We note first that~$\b \not\in k$, since otherwise the sequence 
$\f^{n_i}(\b) \in k$ could not converge~$\gp$-adically for any place~$\gp$
of~$K/k$ unless it became eventually constant, which would force~$\b$ to be
preperiodic, contrary to our current assumption.

Assume now that~$\f$ is not purely inseparable. Choose an
element \text{$b \in k$} that is periodic with respect to~$\f$ and
such that the backwards orbit of~$b$ with respect to~$\f$ is infinite
and such that no element of~$Z$ is in the (forward) orbit
of~$b$. It follows that the set~$T$ of places~$\gp$ of~$K/k$ for which an
element of~$Z$ is congruent modulo~$\gp$ to an element of the
(forward) orbit of~$b$ is finite. Now choose an element~$a \in k$ in
the backwards orbit of~$b$ with respect to~$\f$, such that~$a$ is not
in the (forward) orbit of~$b$ and such that the places of~$K/k$
extending the place~$\b - a$ of~$k(\b)$ do not belong
to~$T$. Let~$\gp$ be a place of~$K/k$ extending the place~$\b - a$
of~$k(\b)$. By assumption, there exists~$\a_{\gp} \in Z(K)$
with~$\f^{n_i}(\b)$ converging~$\gp$-adically to~$\a_{\gp}$. Thus, for
all large~$i$, we have
\[
\f^{n_i}(a) \equiv \f^{n_i}(\b) \equiv \a_{\gp} \pmod{\gp}.
\]
But, for~$i$ large, $\f^{n_i}(a)$ belongs to the forward orbit
of~$b$. This forces \text{$\gp \in T$}, which is a contradiction.

Finally, we have to deal with the case that~$\f$ is purely inseparable.
Without loss of generality, we may assume that~$\f(z) = z^q$,where~$q$
is a power of the characteristic of~$K$.  Let~$a\in k$ and let~$\gp$
be a place of~$K/k$ extending the place~$\b - a$ of~$k(\b)$. Then the
only way that a sequence $\f^{n_i}(\b)$ can converge~$\gp$-adically is
if~$a$ is in a finite field, and in this case converges to an element
in the Frobenius orbit of~$a$.  There are only finitely many such~$a$
whose orbits intersect the finite set~$Z$, and hence~$\f^{n_i}(\b)$
cannot converge adelically. This completes the proof.
\end{proof}

\section{Comments and Speculations}
\label{CommentsandSpeculations}

\begin{remark}
We have stated our principal results (Theorems~\ref{theorem:brmanobs}
and~\ref{theorem:brmanobsf}) for rational maps of degree at least two,
but we can also consider maps of degree one. After a finite extension
of the field~$K$, any map \text{$\f:\PP^1\to\PP^1$} of degree one may
be conjugated to a map either of the form \text{$\f(z)=az$} or of the
form~$\f(z)=z+1$. There are three cases to consider.
\begin{enumerate}
\item
If~$\f(z)=az$ and~$a$ is a root of unity, say~$a^n=1$,
then~\text{$\f^n(z)=z$} is the identity map, so every point has finite
orbit. Hence the closure of~$\Orbit_\f(P)$ is equal to~$\Orbit_\f(P)$,
so the conclusions of Theorems~\ref{theorem:brmanobs}
and~\ref{theorem:brmanobsf} are clearly true.
\item
If~$\f(z)=az$ and~$a$ is not a root of unity, then we can mimic the
proof of Theorem~\ref{theorem:brmanobs}, replacing
Theorem~\ref{theorem:dynzsig} with the classical Zsigmondy theorem for
the multiplicative group.  So Theorems~\ref{theorem:brmanobs}
and~\ref{theorem:brmanobsf} are also true in this case.
\item
If~$\f(z)=z+1$, then Theorem~\ref{theorem:brmanobs} is not true in
general. For example, take~$Z=\{0,\infty\}$ and~$\b=1$. Then the orbit
of~$\b$ is \text{$\Orbit_\f(\b)=\{1,2,3,\ldots\}$}, which does not
intersect~$Z$. On the other hand, the orbit of~$\b$ contains the
subsequence \text{$\{n!:n=1,2,3,\dots\}$}, and this subsequence converges to
the point in~$Z(\Adele_\QQ)$ that is~$\infty$ at the archimedean place
and is~$0$ at all nonarchimedean places.
Hence~\text{$\Closure\bigl(\Orbit_\f(\b)\bigr)\cap
Z(\Adele_\QQ)\ne\emptyset$}.
\end{enumerate}

\end{remark}

\begin{remark}
Let~$K/\QQ$ be a number field, let~$f(z)\in K(z)$ be a rational
function of degree at least two, and let $\f(z)=z-f(z)/f'(z)$. Iteration
of~$\f(z)$ is Newton's method of finding a root of~$f(z)$. Let~$\a\in
K$ be a point whose orbit~$\Orbit_\f(\a)$ does not contain a root
of~$f$, e.g., any point that is not preperiodic for~$\f$. Then
Theorem~\ref{theorem:brmanobs} says that there are infinitely many
places~$v\in S$ such that Newton's method applied to~$\a$ does not
converge to a root of~$f(z)$.
\par
We raise the following question: In this situation (with $\deg
f(z)\ge3$), is it true that Newton iteration applied to~$\a$
converges for infinitely many places and also does not converge for
infinitely many places?
\end{remark}

\begin{remark}
We observe that an equality
\[
  \Orbit_\f(P) \cap Z(K) =
  \Closure\bigl(\Orbit_\f(P)\bigr) \cap Z(\Adele_K),
\]
such as given in~\cite{hsiasilverman} and in
Theorem~\ref{theorem:brmanobs}, provides an algorithm for determining
if~$\Orbit_\f(P) \cap Z(K)$ is nonempty.  Thus ``by day'' one computes
elements of the orbit and checks if they are in~$Z$, while ``by
night'' one checks if there are points in $\Orbit_\f(P)\cap Z$
modulo~$\gQ$ for ideals~$\gQ$ that are more and more divisible by
primes not in~$S$. For a fixed~$\gQ$ this is a finite
computation. This is analogous to the fact that the Scharaschkin
conjecture~\cite{scharaschkin} gives an algorithm to decide whether a
curve of genus at least~$2$ has any rational points (but not necessarily
to find them all).
\end{remark}

\begin{acknowledgement}
The authors would like to thank the organizers of the TateFest (May
2008), at which event this research was initiated.
\end{acknowledgement}




\end{document}